\def\Anonimo{1}  
\definecolor{orangeDark}{RGB}{204,76,2}
\definecolor{orangeMedium}{RGB}{254,153,41}
\definecolor{orangeLight}{RGB}{254,217,142}
\definecolor{blueishLight}{RGB}{189,215,231}
\definecolor{blueishMedium}{RGB}{107,174,214}
\definecolor{blueishDark}{RGB}{33,113,181}
\definecolor{pinkLight}{RGB}{251,180,185}
\definecolor{pinkMedium}{RGB}{247,104,161}
\definecolor{pinkDark}{RGB}{197,27,138}
\definecolor{pinkDark2}{RGB}{122,1,119}
\definecolor{greenMedium}{RGB}{173,221,142}
\definecolor{greenDark}{RGB}{49,163,84}
\theoremstyle{plain}
\newtheorem{thm}{Theorem}
\newtheorem{lem}[thm]{Lemma}
\newtheorem{cor}[thm]{Corollary}
\newtheorem{conjec}[thm]{Conjecture}
\theoremstyle{definition}
\newcommand{\p}[1]{p_{(#1)}}
\title{On Mixed Cages of Girth 6}
\author[1]{Gabriela Araujo-Pardo}
\author[2]{Mirabel Mendoza-Cadena}
\affil[1]{Instituto de Matemáticas, Universidad Nacional Autónoma de México, Campus Juriquilla, Querétaro, Mexico.}
\affil[2]{Centro de Modelamiento Matemático (CNRS IRL2807), Universidad de Chile, Santiago, Chile.\\
Emails: \texttt{garaujo@im.unam.mx, lmmendoza@cmm.uchile.cl}}
\date{ }
\begin{document}
\ifnum\Anonimo=1 {
    \maketitle}
\else{\begin{center}
    \Large On Mixed Cages of Girth 6
\end{center}}
\fi

\begin{abstract}
    A $[z,r;g]$-mixed cage is a mixed graph of minimum order such that each vertex has $z$ in-arcs, $z$ out-arcs, $r$ edges, and it has girth $g$. 
    We present an infinite family of mixed graphs with girth 6. This construction also provides an upper bound on the minimum order of mixed cages of girth 6. Additionally, we introduce a lower bound on the minimum order for any mixed cage. \medskip
    
    \textbf{Keywords:} mixed cages; girth; biaffine plane; projective plane; order bounds
\end{abstract}

\section{Introduction \label{sec:intro}}
Mixed regular cages where introduced by Araujo-Pardo,  Hernández-Cruz, and Montellano-Ballesteros~\cite{araujo2019mixed}.

A \emph{mixed graph} is a simple\footnote{No parallel arcs or edges are allowed, neither a parallel arc and edge.} graph $G=(V;E\cup A)$ where $V(G)$ is the set of vertices, $E(G)$ is the set of edges and $A(G)$ the set of arcs. We denote an edge between $u$ and $v$ by $uv$ and an arc from $u$ to $v$ by $(u,v)$. We say that $u$ and $v$ are \emph{edge-adjacent} (\emph{arc-adjacent}) if they are connected by an edge (arc).

Walks, paths and cycles are referred  by a sequence of vertices $(v_0, v_1, \dots, v_n)$, where $v_i$ and $v_{i+1}$ are connected by either the arc $(v_i,v_{i+1})$ or by the edge $v_iv_{i+1}$. The \emph{girth} of $G$ is the length of the shortest cycle.

If each vertex has  $z$ in-arcs, $z$ out-arcs, and $r$ edges, then the graph is called \emph{mixed regular graph}. If, additionally, it has girth $g$, then we say that $G$ is a \emph{$[z,r;g]$-mixed graph}.

A \emph{$[z,r;g]$-mixed cage} is a $[z,r;g]$-mixed graph of minimum order, where the \emph{order} of a graph is the number of vertices. The minimum order of $[z,r;g]$-mixed graphs is denoted by $n[z,r;g]$.

\paragraph{Previous work.} As mentioned above, in  \cite{araujo2019mixed} it is described the notion of mixed regular graphs for the first time. Explicit mixed cages for given values and some lower bounds were presented. Additionally, they gave a lower bound on the order of $[1, r; g]$-mixed cages, also known as the AHM bound. This bound relies on the Moore’s lower bound (see e.g.~\cite{exoo2012dynamic}). 
The authors~\cite{araujo2019mixed} also showed that the AHM bound is achieved for $r=2$ and any $g$. Exoo~\cite{exoo2023mixed} showed that the AHM bound can be achieved for $r=3$ and $g=5,6$.

Later, Araujo-Pardo, {De la Cruz}, and González-Moreno~\cite{araujo2022monotonicity} showed that the order of a $[z,r;g]$-mixed cage is a monotone function with respect to $g$ if $z\in \{ 1,2 \}$. Moreover, they proved that such $[z,r;g]$-mixed cages are 2-connected, and for general $z$ the graphs are strongly connected. Further constructions and lower bounds were presented for $g = 5$ too. This later uses incidence structures, in particular, the elliptic semiplane of type $C$. Next section includes more details on this topic.

Exoo~\cite{exoo2023mixed} focused on $z \in  \{1, 2\}$ and presented a series of mixed cages and upper bounds for different values of $r \leq 5$ and $g \leq 8$, by the use of computer searches.
Jajcayová and Jajcay~\cite{jajcajova2024totallyregular} developed $[z,r;g]$-mixed graphs by replacing some edges of specific regular undirected graphs with arcs.
Recently, Araujo-Pardo and Mendoza-Cadena \cite{araujopardo2025noteGirthFive} showed a construction for mixed cages of girth 5 which gives a better bound for the order of mixed cages than previous works.

\paragraph{Our results.} Section~\ref{sec:preliminaries} provides the necessary background for our constructions. In \cref{sec:family_girth_6}, we define an infinite family of mixed graphs with a girth of 6, utilizing the incidence graph of the biaffine plane over the field $\mathbb{Z}_q$, where $q$ is a prime number. This construction establishes upper bounds for the minimum order of mixed graphs with a girth of 6. In \cref{sec:lower_bounds}, we present a lower bound for the minimum order of $[z,r;g]$-mixed cages for any given values of $z$, $r$, and $g$. Finally, in \cref{sec:1-3-6_construction}, we introduce a construction of the $[1,3;6]$-mixed cage derived from the projective plane over the Galois field of order 4.

\section{Preliminaries \label{sec:preliminaries}}
To be self-contained, we describe in detail three graphs that are obtained from projective planes of order $q$. 
For an introduction see e.g. \cite{balbuena2008incidence, van2001course, kiss2019finite, godsil2013algebraic}.
Constructions using the incidence graph of a projective plane or their substructures have been widely used. 
The first construction was presented by Miller and Sir{\'a}n \cite{miller2012moore}, followed by Hafner~\cite{hafner2004geometric},  Araujo-Pardo,  Noy and Serra~\cite{araujo2006geometric}, and
Araujo-Pardo, Balbuena, Miller and Ždímalová~\cite{araujo2017family}, just to mention some examples.
Regarding mixed cages, Araujo-Pardo, {De la Cruz}, and González-Moreno~\cite{araujo2022monotonicity} generated a family of $[z, r;5]$-mixed graphs using this technique. 

\paragraph{The graph $\bm{G_{(2,q)}}$ of the projective plane on the Galois field.}  Let $q$ be a prime power, and let $\mathbb{G}_q$ be the Galois field of order $q$. 
We describe the projective plane $PG(2; q)$.
Let $L_\infty$ and $P_\infty$ be the incident line and point in the projective plane, called the infinity line and infinity point, respectively.
The classes of lines (points)  are denoted by $L_i$ ($P_i$)  for $i \in \mathbb{G}_q$.
Each line $L$ have $q+1$ points and each point $P$ is incident to $q+1$ lines. 
Any line in $L_m$ described by $y = m x +b$  is denoted by $[m,b]$, and any point in $P_x$ is denoted by $(x,y)$. 
For each $i \in \mathbb{G}_q$, the set of points incident to $L_i$ is $\{ (i,j) \, \, \vert \, \, j \in  \mathbb{G}_q\}$ and the set of lines incident to $P_i$ is $\{ [i,j]\, \vert \, j \in  \mathbb{G}_q \}$.
Finally, for $m,b \in \mathbb{G}_q$ the line $[m,b]$ is incident to all points $(x,y)$ such that $y = m x +b$ holds for $x,y \in \mathbb{G}_q$.

The projective plane $PG(2; q)$ has an incidence graph associated that we denote by $G_{(2,q)}$. Each line and point has a unique associated vertex $v \in V(G_{(2,q)}) $. 
Similarly, $uv \in E(G_{(2,q)})$ if and only if  their associated line and point are incident in  $PG(2; q)$. For an easy reading, we may refer to ``line $[m,b]$'' (point $(x,y)$) instead of ``vertex associated to the line $[m,b]$'' (point $(x,y)$). Note that $G_{(2,q)}$ has order $2q^2 + 2q + 2$, diameter 3 and girth 6. In fact, it is known that this incidence graph is a $[0,q;6]$-mixed cage (undirected cage) that attains the Moore bound.

\paragraph{The bipartite graph $\bm{B_q}$.} The corresponding plane of the following incidence graph is also known as the biaffine plane. Consider the graph $G_{(2,q)}$ constructed above and delete the vertices $L_\infty, P_\infty,$ and  $ L_i,P_i$ for $i \in \mathbb{G}_q$. We are left with a bipartite graph that we denote by $B_q$.  Note that $B_q$ consists only on points $(x,y)$ and lines $[m,b]$, for $x,y,m,b \in \mathbb{G}_q$, and it has order $2q^2$, diameter 4 and girth 6 if $q\geq 3$, and girth 8 if $q=2$.
 
\paragraph{The circulant digraph $\overset{\bm{\rightarrow}}{\bm{C}}_{\bm{q}} \bm{{(i_1, \dots, i_k)}}$.} 
Consider the field $\mathbb{Z}_q$ and $i_1, \dots, i_k \in \mathbb{Z}_q$. A \emph{circulant digraph} $\overset{\rightarrow}{C}_q(i_1, \dots, i_k)$ has a vertex $v_i$ for each $i \in \{ 0, 1, \dots, q -1 \}$. There exists an arc $(v_{a},v_{b})$ if and only if $b \equiv  a + i \mod q$ for some $i \in i_1, \dots, i_k$. It is known that if $q = z(g-1) + 1$, then $\overset{\rightarrow}{C}_q( 1, 2, \dots, z)$ is a $[z,0;g]$-mixed graph (see e.g.~\cite{behzad1970minimal, araujo2009dicages}). 

The following conjecture is derived from the known upper bound on the order of circulant digraphs of the form $\overset{\rightarrow}{C}_q( 1, 2, \dots, z)$. 
\begin{conjec}[Behzad-Chartrand-Wall, \cite{behzad1970minimal}]\label{conj:order_of_digraph_is_verticesCirculant}
    Let $n[z,g]$ denote the minimum order of a $z$-regular digraph of girth $g$. Then, $n[z,g] = z(g-1) + 1$.
\end{conjec}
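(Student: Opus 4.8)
The statement hides two inequalities inside the single equality $n[z,g]=z(g-1)+1$, and the two directions are of completely different character. The upper bound $n[z,g]\le z(g-1)+1$ is already in hand: as recorded just above the statement, when $q=z(g-1)+1$ the circulant digraph $\overset{\rightarrow}{C}_q(1,2,\dots,z)$ is a $z$-regular digraph of girth $g$ on exactly $q=z(g-1)+1$ vertices, so it serves as an explicit witness of order $z(g-1)+1$. All of the difficulty is therefore concentrated in the matching lower bound $n[z,g]\ge z(g-1)+1$, i.e. the assertion that no $z$-regular digraph of girth $g$ can have fewer than $z(g-1)+1$ vertices.

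For the lower bound the plan is to attempt a counting argument anchored at a single vertex. Fixing a $z$-regular digraph $D$ of girth $g$ and a vertex $v$, one examines the vertices reachable from $v$ along directed paths of lengths $0,1,\dots,g-1$ and tries to parlay the absence of short directed cycles into the estimate $|V(D)|\ge z(g-1)+1$. The cleanest route is to reformulate the claim in the language of the Caccetta--Häggkvist problem: the inequality $n[z,g]\ge z(g-1)+1$ is precisely the statement that a digraph on $n$ vertices with minimum out-degree $z$ must contain a directed cycle of length at most $\lceil n/z\rceil$, specialized to the regular case. I would then try to import the fractional weight-shifting and spectral machinery developed around that conjecture, exploiting the extra rigidity that $D$ is both in- and out-regular.

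The hard part is exactly this lower bound, and I do not expect any elementary argument to close it. Unlike the undirected setting, a lower bound on the \emph{directed} girth controls only closed directed walks through a vertex, not coincidences among forward neighborhoods: two directed paths leaving $v$ may meet at a common vertex without creating any short directed cycle. Hence the out-BFS structure need not be a tree, the usual Moore-type level count breaks down, and there is no direct passage to the linear target $z(g-1)+1$. This obstruction is genuine: the lower bound is equivalent to the regular case of the Caccetta--Häggkvist conjecture, which remains open, so a complete proof is beyond present methods. The realistic plan is therefore to establish the equality in the regimes where Caccetta--Häggkvist is known --- small girth $g$ or small degree $z$ --- and to treat the full statement as a working hypothesis motivating the bounds derived in the later sections.
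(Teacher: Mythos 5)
This statement is the Behzad--Chartrand--Wall conjecture: the paper gives no proof of it (nor could it --- it only records that the conjecture is known to hold for degrees $2,3,4$ and for vertex-transitive digraphs), so your refusal to manufacture a complete proof is exactly the right call. Your decomposition also matches the paper's framing: the upper bound $n[z,g]\le z(g-1)+1$ is witnessed by the circulant digraph $\overset{\rightarrow}{C}_q(1,2,\dots,z)$ with $q=z(g-1)+1$, which is precisely why the paper says the conjecture ``is derived from the known upper bound on the order of circulant digraphs,'' while the matching lower bound --- equivalent, as you correctly note, to the diregular case of the Caccetta--H\"aggkvist problem --- remains open.
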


The conjecture has been shown true for $r = 2,3,4$ and for vertex-transitive digraphs (see e.g.~\cite{araujo2009dicages}).

\paragraph{The tree $\bm{\mathcal{T}_{r,g}}$ and lower Bounds for mixed cages.} Araujo-Pardo,  Hernández-Cruz, and Montellano-Ballesteros~\cite{araujo2019mixed} provided a very simple lower bound for the minimum order of $[1,r;g]$-mixed cages which is based on Moore's bound $n_0[r,g]$ that it is defined as:
    \begin{align*}
        n_0[r,g] = \begin{cases}
            \displaystyle 1 + r\sum_{i = 1}^{k-1} (r-1)^i & \text{ if the girth is $g = 2k+1$}, \\[1em]
            \displaystyle 2\sum_{i = 0}^{k-1} (r-1)^i & \text{ if the girth is $g = 2k$}. \\
        \end{cases}
    \end{align*}
Surprisingly, the bound for $n[1,r;g]$ turn out to be a very good lower bound as Exoo~\cite{exoo2023mixed} exhibited in his work, as mentioned in \cref{sec:intro}. 
This bound is obtained by counting the vertices of the mixed-tree $\mathcal{T}_{r,g}$, which has $(g+1)/2$ levels if $g$ is odd, and $g/2$ levels if $g$ is even.
$\mathcal{T}_{r,g}$ has a directed path $(v_1, \dots, v_g)$ at level 0 and it attaches a Moore tree of depth $i$ to vertices $v_{i+1}$ and $v_{g-i}$ for $i = 1, \dots, \lfloor \frac{g-1}{2} \rfloor$. That is, the tree has $g$ nodes at level 0, which we denote by $v_1, \dots, v_g$.
At level 1, for each node $v_i = v_2, \dots, v_{g-1}$, we add $r$ children $v_{i,1}, \dots, v_{i,r}$. 
At level 2, for each node $v_{i,j}$ with $i\in \{ 3, \dots, g-2\}$ and for all $j$, we add $r-1$ children $v_{i,j,1}, \dots, v_{i,j,r-1}$. 
At level 3, for each node $v_{i,j,k}$ with $i\in \{ 4, \dots, g-3\}$ and for all $j,k$, we add $r-1$ children $v_{i,j,k,1}, \dots, v_{i,j,k,r-1}$. 
And so on. If $g$ is odd, we also add extra $r-1$ children to each node of the level starting in $i=(g+1)/2$.
Finally, we direct the path  from $v_1$ to $v_g$ of $\mathcal{T}_{r,g}$. 

\begin{thm}[AHM lower bound,\cite{araujo2019mixed}] \label{thm:AHM_lower_bound}
     Let $n[1,r;g]$ be the order of a $[1,r;g]$-mixed cage. Then,
    \begin{equation} \displaystyle
        n[1,r;g] \geq n_{\text{AHM}}[1,r;g]  = \begin{cases}
            \displaystyle 2\left( 1 +\sum_{i = 1}^{k-1} n_0[r,2i+1] \right) + n_0[r,g] & \text{ if the girth is $g = 2k+1$}, \\[1.5em]
            \displaystyle 2\left( 1 +\sum_{i = 1}^{k-1} n_0[r,2i+1] \right) & \text{ if the girth is $g = 2k$}. \\
        \end{cases}
    \end{equation}
\end{thm}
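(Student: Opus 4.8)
The plan is to exhibit a copy of the mixed-tree $\mathcal{T}_{r,g}$ as a subgraph of an arbitrary $[1,r;g]$-mixed graph $G$; since a subgraph has at most as many vertices as $G$, this gives $n[1,r;g]\ge |V(\mathcal{T}_{r,g})|$, and it then remains only to count $|V(\mathcal{T}_{r,g})|$ and check that it equals $n_{\text{AHM}}[1,r;g]$. To build the directed backbone I would use that every vertex of $G$ has exactly one out-arc: following the unique out-arc repeatedly from any start vertex gives a deterministic walk in the finite graph $G$, which must eventually close up into a directed cycle. A directed cycle is in particular a cycle of $G$, so its length is at least the girth $g$; taking $g$ consecutive vertices of it yields a directed path $(v_1,\dots,v_g)$ on $g$ distinct vertices, realizing the backbone of $\mathcal{T}_{r,g}$.

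Next I would attach the Moore trees along edges. At each backbone vertex $v_p$ with $2\le p\le g-1$ I grow a breadth-first edge-tree of depth $D(p):=\min(p-1,\,g-p)$, branching on all $r$ edges at the root and on the $r-1$ non-parent edges at every deeper vertex; since $D(i+1)=D(g-i)=i$ for $1\le i\le\lfloor (g-1)/2\rfloor$, this is exactly the prescription attaching a depth-$i$ tree at $v_{i+1}$ and $v_{g-i}$. The heart of the argument is to prove that the $g$ backbone vertices together with all tree vertices are pairwise distinct. Suppose instead that two of them are the same vertex $w$, reached from $v_a$ by an edge-path of length $d_a\le D(a)$ and from $v_b$ by an edge-path of length $d_b\le D(b)$ (allowing $a=b$, and allowing $w$ to be a backbone vertex as the degenerate case $d_b=0$). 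Concatenating these two edge-paths with the directed arc-path from $v_{\min(a,b)}$ to $v_{\max(a,b)}$ gives a closed walk $W$ of length $|a-b|+d_a+d_b$. A short case analysis on the positions of $a,b$ relative to the midpoint $g/2$, using $D(p)=\min(p-1,g-p)$, shows that this length is always at most $g-1$. Finally $W$ is not reducible to a backtracking walk in a tree: when $a=b$ the two distinct edge-paths to $w$ already force a cycle in their symmetric difference, and when $a\ne b$ the arc-path contributes a backbone arc that, having no parallel edge in the simple mixed graph $G$, is traversed an odd number of times. Hence the sub-multigraph traversed by $W$ contains a genuine cycle, of length at most $|a-b|+d_a+d_b\le g-1<g$, contradicting the girth, so no collision occurs.

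It then remains to count. A depth-$d$ edge-Moore-tree contributes $n_0[r,2d+1]-1$ non-root vertices, so
$$|V(\mathcal{T}_{r,g})| = g + \sum_{p=2}^{g-1}\bigl(n_0[r,\,2\min(p-1,g-p)+1]-1\bigr).$$
The $g-2$ summands each carry a $-1$, contributing $-(g-2)$, which together with the backbone count $g$ leaves the constant $2$. As $p$ runs over $2,\dots,g-1$ the depth $\min(p-1,g-p)$ takes each value $i\in\{1,\dots,k-1\}$ exactly twice, and additionally the value $k$ once precisely when $g=2k+1$ (the middle vertex $v_{k+1}$, which carries the unique depth-$k$ tree). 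Hence the depth terms sum to $2\sum_{i=1}^{k-1} n_0[r,2i+1]$ in the even case and to $2\sum_{i=1}^{k-1} n_0[r,2i+1]+n_0[r,g]$ in the odd case, reproducing the two branches of the stated formula.

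I expect the distinctness step to be the main obstacle. One must treat uniformly the several ways a collision can arise—inside a single tree, between two different trees, or between a tree and a backbone vertex—verify in each regime that the resulting closed walk has length strictly below $g$, and, crucially, argue that this closed walk genuinely contains a short cycle rather than merely doubling back. It is precisely here that the mixed structure enters: in the cross-tree and tree–backbone cases the un-cancellable backbone arc guarantees a nontrivial cycle, which is what pins the bound to $g-1$ and makes the final count tight.
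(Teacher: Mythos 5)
Your proposal is correct and follows essentially the route the paper intends: the paper itself states this theorem as a cited result from \cite{araujo2019mixed} and gives no proof of its own, but its preliminaries explain that the bound ``is obtained by counting the vertices of the mixed-tree $\mathcal{T}_{r,g}$,'' which is precisely what you carry out via the unique-out-arc backbone, the attached Moore trees of depth $\min(p-1,g-p)$, the girth-forced distinctness argument (closed walk of length at most $g-1$ containing a genuine cycle), and the final count reproducing both cases of the formula. The only discrepancy is notational: your count uses the standard Moore bound $n_0[r,2d+1]=1+r\sum_{i=0}^{d-1}(r-1)^i$, whereas the paper's displayed definition starts that sum at $i=1$, an apparent typo in the paper rather than a flaw in your argument.
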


\section{A family of mixed graphs with girth 6\label{sec:family_girth_6}}
In this section, we exhibit an infinity family of mixed graphs that have girth 6. We also show that this construction provides an upper bound for the minimum order of $[z,r;6]$-mixed cages, for certain pairs of $z$ and $r$. The main idea of our construction is to consider two copies of the undirected adjacency graph corresponding to the biaffine plane and connect them using arcs.

Let $q$ be a prime. Consider the bipartite graph $B_q$ over $\mathbb{Z}_q$ described in \cref{sec:preliminaries}. Take a copy of $B_q$ and refer to it as $B'_q$. For ease of notation, whenever $v'\in V(B'_q)$ appears, we assume that $v' = v$, that is, $v'$ is the copy vertex of the original vertex $v \in V(B_q)$; similarly for edges.

Let $p \in \mathbb{Z}$ be an odd number such that $2q = 4p +2$, or equivalently, $p = \frac{q-1}{2}$.

We describe the construction of digraph  $C_m$  for $m \in\mathbb{Z}_q$, which is a similar digraph to a circulant graph $\overset{\rightarrow}{C}_{2q}(1, 3, 5,  \dots, p)$. We use a different notation as the digraph we describe is also bipartite. $C_m$ has vertices $\{ [m,b] \, \vert \, b \in \mathbb{Z}_q \} \cup \{ [m',b'] \, \vert \, b \in \mathbb{Z}_q \}$. It has $p$ directed cycles, each one using the $2q$ vertices. A cycle consists on arcs between original and copy-vertices. Let us fix the the following ordering of the vertices by $[m,0], [m',0'], [m,1], [m',1'], \dots, [m,q-1], [m', (q-1)']$, which we call the \emph{alternating ordering}.
Then, we say that a cycle \emph{jumps} between vertices with a fixed length $j$ with respect to the alternating ordering if each of its arcs is of the form $(v_{i},v_{i+j \mod q})$ and either ($v_i \in V(B_q)$ and $v'_{i+j\mod q} \in V(B'_q)$) or ($v'_i \in V(B'_q)$ and $v_{i+j\mod q} \in V(B_q)$). Hence, for a cycle that has jumps of length $j \in \{ 1, 3, 5, \dots, p \}$, this cycle contains the arcs
\vspace{-2.5ex}
\begin{align*}
    \left( [m, b], [m',  (b+\frac{j-1}{2} )' ] \right), &&  //  \text{ \textit{From original to copy}} \\
    \left( [m', b'], [m, b + \frac{j+1}{2}] \right), && // \text{ \textit{From copy to original}}
\end{align*}
\vspace{-2.8ex} where the sums are taken in $\!\! \mod q$.
Note that this is equivalent to have arcs of type
\begin{align*}
    &\left( [m,b], [m', (b+j )' ] \right) \quad  j \in \{ 0,1,2, \dots, \frac{p-1}{2}\},&&  //  \text{ \textit{From original to copy}} \\  
    &\left( [m', b'], [m, b + j] \right) \quad j \in \{ 1,2,3, \dots, \frac{p+1}{2}\}, &&// \text{ \textit{From copy to original}}
\end{align*}
In the same fashion, we construct digraph $C_x$  for $x \in B_q$. \cref{fig:example_q_11} shows $C_m$ for $m \in \mathbb{Z}_{11}$.

\begin{figure}[t]
    \centering
    \begin{tikzpicture}[scale=.75,node distance=7em, 
    myArc/.style={draw,line width = 1.5pt, -{Stealth[length=4mm]}}, myEdge/.style={line width = 1.5pt}, 
    stateColor/.style={circle,  minimum size=2.2em, draw, line width = 1.2pt,orangeMedium},
    stateColorCopy/.style={circle,  minimum size=2.2em, draw, line width = 1.2pt,blueishMedium}]   
    
   \node[minimum size={2*4.25cm},regular polygon,regular polygon
    sides=22,rotate=11.75] at (3,3) (22-gon) {};
    \foreach \p [count=\n from 0] in {11,...,1}{%
        \tikzmath{integer \x, \y;
        \x = 2*\p; \y = 2*\p-1;}
        \node[stateColorCopy] at (22-gon.corner \y) (copy-\n) {$\n'$};
        \node[stateColor] at (22-gon.corner \x) (point-\n) {$\n$};};

    \foreach \i in {0, ..., 10}{
        \tikzmath{integer \j1, \j2, \j3;
        \j1 = Mod(\i+1,11); \j2 = Mod(\i+2,11); \j3 = Mod(\i+3,11); }
        \draw[myArc] (point-\i) to (copy-\i);
        \draw[myArc] (copy-\i) to (point-\j1);
        \draw[myArc, bend left=70] (point-\i) to (copy-\j1);
        \draw[myArc, bend left=90] (copy-\i) to (point-\j2);
        \draw[myArc, bend right=20] (point-\i) to (copy-\j2);
        \draw[myArc, bend right=30] (copy-\i) to (point-\j3);
        }
\end{tikzpicture}
    \caption{$C_m$ for $m \in \mathbb{Z}_{11}$, $p = 5$. Node $[m,b]$ is written simply by $b$. Original nodes are drawn in orange and copy nodes in blue. Outside arcs have jumps of length 3 while inside arcs have jumps of length 5. One shortest path is $(0,2',5,7',10,10')$.}
    \label{fig:example_q_11}
\end{figure}

The following result is not difficult to see by the fact that $C_m$ and $C_x$ are bipartite circulant graphs. Nonetheless, we show a proof for completeness. 

\begin{lem}\label{lem:digraph_Cm_is_mixed graph}
    $C_m$ and $C_x$ are $[\frac{p+1}{2}, 0 ; 6]$-mixed graphs for $m,x \in B_q$.
\end{lem}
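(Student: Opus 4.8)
The plan is to verify the three defining properties of a $[\frac{p+1}{2},0;6]$-mixed graph in turn: that $C_m$ carries only arcs (so $r=0$), that every vertex has exactly $\frac{p+1}{2}$ in-arcs and $\frac{p+1}{2}$ out-arcs, and that its girth equals $6$. The first two are immediate from the explicit arc lists. Each original vertex $[m,b]$ emits the $\frac{p+1}{2}$ arcs $([m,b],[m',(b+j)'])$ with $j\in\{0,\dots,\frac{p-1}{2}\}$ and receives the $\frac{p+1}{2}$ arcs $([m',(b-j)'],[m,b])$ with $j\in\{1,\dots,\frac{p+1}{2}\}$; symmetrically each copy vertex $[m',b']$ emits $\frac{p+1}{2}$ arcs and receives $\frac{p+1}{2}$ arcs. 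Since no edges appear in the construction, $r=0$ and $z=\frac{p+1}{2}$. The substance of the lemma is therefore the girth, and I would open that part with a structural observation.

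Using the alternating ordering I would identify $C_m$ with the circulant digraph $\overset{\rightarrow}{C}_{2q}(1,3,\dots,p)$ on $\mathbb{Z}_{2q}$, assigning the index $2b$ to $[m,b]$ and $2b+1$ to $[m',b']$. A one-line computation shows each original-to-copy arc becomes a forward jump of $2j+1\in\{1,3,\dots,p\}$ and each copy-to-original arc a forward jump of $2j-1\in\{1,3,\dots,p\}$, the wraparound in the label $b$ contributing only a multiple of $2q$ and hence nothing modulo $2q$. Thus every arc is a forward jump by an \emph{odd} amount that is at most $p$, and the two colour classes are precisely the even and odd indices, so $C_m$ is bipartite and all of its cycles have even length.

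The girth then follows from a counting bound on the jumps. A closed directed walk of length $\ell$ corresponds to jumps $s_1,\dots,s_\ell\in\{1,3,\dots,p\}$ with $\sum_i s_i\equiv 0\pmod{2q}$; since this sum is strictly positive it is at least $2q=4p+2$, while $s_i\le p$ forces $\ell p\ge 4p+2$, i.e.\ $\ell\ge 5$, and evenness upgrades this to $\ell\ge 6$. This excludes all cycles of length $2$ and $4$, giving girth at least $6$. For the matching upper bound I would exhibit the explicit $6$-cycle with jump sequence $(p,p,p,p,1,1)$: its sum is exactly $4p+2=2q$, and its partial sums $0,p,2p,3p,4p,4p+1$ are pairwise distinct modulo $2q$, so it is a genuine simple cycle. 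Hence the girth is exactly $6$, and the argument for $C_x$ is verbatim the same.

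I expect the only delicate point to be the girth lower bound: one must use that returning to the start forces the jump sum to be a \emph{positive} multiple of $2q$ (not merely nonnegative), since it is this fact, combined with the per-jump bound $s_i\le p$, that rules out the short cycles. Once the circulant identification and the parity/bipartiteness observation are in place, the remaining estimates are entirely routine.
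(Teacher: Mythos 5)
Your proof is correct and takes essentially the same route as the paper's: the same degree count, the same use of bipartiteness to exclude odd cycle lengths, the same jump-sum counting bound (a short cycle's jumps sum to at most $\ell p < 4p+2 = 2q$, so they cannot close up) and the very same explicit $6$-cycle with jump sequence $(p,p,p,p,1,1)$. Your write-up is somewhat more careful than the paper's — you make the identification with $\overset{\rightarrow}{C}_{2q}(1,3,\dots,p)$ precise, note that a closed walk's jump sum must be a \emph{positive} multiple of $2q$, and check via partial sums that the exhibited $6$-cycle is simple — but these are refinements of the same argument rather than a different approach.
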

\begin{proof}
    Without loss of generality we prove the statement for $C_m$.
    First, by construction each vertex clearly has $\frac{p+1}{2}$ in- and out-arcs, and $0$ edges.

    $C_m$ has a cycle of length 6. This cycle uses four arcs that correspond to jumps of length $p$ and then two arcs corresponding to jumps of length 1 (e.g., consider the cycle $\left( [m,0], [m', (\frac{p-1}{2})'], [m, p], [m', (\frac{3p-1}{2})'], [m, 2p], \right.\allowbreak \left. [m', (q-1)'] \right)$). 
    
    We claim that there is no small cycle than this type.  Suppose that $K$ is a cycle of length smaller than 6. 
    As $C_m$ is bipartite, then the length of $K$ cannot be 5 or 3. This implies that $|K| = 4$ as clearly $|K|=2$ is not possible by construction. Let $j_1, j_3, j_5, \dots, j_p$ be the length of the jumps made by $K$ w.r.t. the alternating ordering. Then, 
    \begin{equation}
        2q = a_1j_1 + a_3j_3 + \dots + a_pj_p \label{eq:cycle_K_structure}
    \end{equation}
    holds, where $a_j$ is the number of jumps of length $j$. By assumption on the size of $K$, even giving four jumps of the largest length is not enough to fulfill \cref{eq:cycle_K_structure} as $2q \neq 4p$. This shows that such cycle $K$ cannot exists, concluding the proof.
\end{proof}

Finally, consider the mixed graph $G_{p,q} = \left( V(B_q)\bigcup V(B'_q), E(B_q)\bigcup E(B'_q) \bigcup\limits_{m \in \mathbb{Z}_q} A(C_m) \bigcup\limits_{x \in \mathbb{Z}_q} A(C_x)\right)$.

\cref{fig:1-3-6-Construction} shows the construction with parameters $q=3$ and $p=1$, which determines a $[1,3;6]$-mixed graph.

\begin{thm}\label{thm:family_is_mixed_cage}
    For all $q\geq 3$ prime and $p = \frac{q-1}{2}$ such that $p$ is odd, the graph $G_{p,q}$ is a $[\frac{p+1}{2}, q; 6]$-mixed graph.
\end{thm}
\begin{proof}
    Parameters $\frac{p+1}{2}$ and $q$ are evident by construction and by \cref{lem:digraph_Cm_is_mixed graph}.
    For all $m,x \in \mathbb{Z}_q$, each digraph $C_m$ and $C_x$ has directed girth 6 by \cref{lem:digraph_Cm_is_mixed graph}. Similarly, the undirected graphs $B_q$ and $B'_q$ have undirected girth 6 (see \cref{sec:preliminaries}). We only need to verify that there is no mixed cycle of length smaller than 6, that is, that no new mixed cycle is formed by making the union of such graphs.

    Suppose that $K$ is a mixed cycle so that $|K|<6$. It is not difficult to see that $K$ cannot have length 2 nor 3.

    We show that $|K|\neq 4$. As we only have arcs between lines (points) that share the first same coordinate (one in vertex is in $B_q$ and the other in $B'_q$), in order to have a mixed cycle of length 4,  we need two edges: one in $B_q$ and one in $B'_q$. Thus, we require two arcs, one leaving $B_q$ (and entering $B'_q$) and one entering it  (and leaving $B'_q$). Consider a point $(x, mx+b)$ and its adjacent line (by edge) $[m,b]$. We know that $(x, mx+b)$ is arc-adjacent to $(x', (mx+b + j)')$ in $C_x$ with $j \in \{ 0,1, 2 \dots , \frac{p-1}{2} \}$. By properties of $B'_q$ (being the copy of $B_q$), $(x', (mx+b)')$ is edge-adjacent to $[m',b']$. By construction, $[m,b]$ is arc-adjacent to $[m',(b+j)']$, implying that there is no mixed cycle of length 4.

    We show that $|K|\neq 5$. To see this, note that the graph $G_{p,q}$ is a mixed bipartite graph with parts $\left\{[m,b], (x',y') \, \vert \, m, b \in V(B_q), \, x',y' \in V(B'_q) \right\}$ and $\left\{ [m',b'], (x,y) \, \vert \, m,b \in V(B'_q), \, x,y \in V(B_q) \right\}$. This implies that there is no mixed cycle of length 5.
\end{proof}

Note that $G_{p,q}$ has a lot of mixed cycles of length 6. To see this, consider the points $(x_1, mx_1 +b)$ and  $(x_2,mx_2 + b)$. Those points intersect on line $[m,b]$. Let $\left( x_1', (mx_1 +b + j)' \right)$ be the vertex arc-adjacent to 
$(x_1, mx_1 +b)$, for some $j \in \{ 1,2, 3, \dots, \frac{p+1}{2} \}$. Also, consider the vertex $\left( x_2', (mx_2 +b + j-1)' \right)$ which is arc-adjacent to $(x_2, mx_2 +b)$. Clearly, points $\left( x_1', (mx_1 +b + j)' \right)$ and $\left( x_2', (mx_2 +b + j-1)' \right)$ intersect in line $[m',b']$. This implies that we can find the mixed cycle of length 6 
\begin{equation*}
    \left( (x_1, mx_1 +b),  \left( x_1', (mx_1 +b + j)' \right),  [m',b'],  \left( x_2', (mx_2 +b + j-1)' \right),  (x_2,mx_2 + b),  [m,b] \right).
\end{equation*}

Note that the assumption on $p$ being odd can also be modified to allow even numbers. Hence, the jumps allowed on the circulant digraph are of the type $\{ 1,3,5, \dots, p-1 \}$.

\begin{cor}
    For all $q\geq 3$ prime and $p = \frac{q-1}{2}$ such that $p$ is even, the graph $G_{p,q}$ is a $[\frac{p}{2}, q; 6]$-mixed graph.
\end{cor}

Finally, the following corollary on the order of a mixed cage is trivial.
\begin{cor}
    Let $n[\frac{p+1}{2}, q; 6]$ be the minimum order of a $[\frac{p+1}{2}, q; 6]$-mixed cage with $q\geq 3$ a prime and $p = \frac{q-1}{2}$ odd. Then, $n[z,r;g] \leq 4q^2$.
\end{cor}

\cref{fig:1-3-6-Construction} shows that our construction for the $[1,3;6]$-mixed graph contains 36 nodes, while the $[1,3;6]$-mixed cage has 30 nodes (see \cref{sec:1-3-6_construction}).

\begin{figure}
    \centering
    \begin{tikzpicture}[scale=.9,
    myArc/.style={draw,line width = 1.5pt, -{Stealth[length=4mm]}}, 
    myEdge/.style={line width = 1.5pt}, 
    stateColor/.style={circle,  minimum size=2.6em, draw, line width = 1.2pt,orangeMedium},
    stateColorCopy/.style={circle,  minimum size=2.6em, draw, line width = 1.2pt,blueishMedium}]   
       \foreach \m in {0,1,2}{
            \foreach \n in {0,1,2}{
                \node[stateColor] at (0, -3.5*\m - 1.15*\n) (\m-point-\n) { };
                    \node at (\m-point-\n) (\m-point-\n-text) {\scriptsize $(\m,\n)$};
                \node[stateColor] at (0, -3.5*\m - 1.15*\n + 10.8) (\m-line-\n) { };
                     \node at (\m-line-\n) (\m-line-\n-text) {\scriptsize$[\m,\n]$};
                \node[stateColorCopy] at (3,-3.5*\m - 1.15*\n) (\m-pointCopy-\n) { };
                    \node at (\m-pointCopy-\n) (\m-pointCopy-\n-text) {\scriptsize$(\m',\n')$};
                \node[stateColorCopy] at (3, -3.5*\m - 1.15*\n + 10.8) (\m-lineCopy-\n) { };
                     \node at (\m-lineCopy-\n) (\m-lineCopy-\n-text) {\scriptsize$[\m',\n']$};};};
    
        \foreach \m in {0,1,2}{
            \foreach \b in {0,1,2}{
                \foreach \x in {0,1,2}{
                    \tikzmath{integer \y; \y = Mod(\m*\x + \b, 3);}
                    \draw[myEdge,orangeDark] (\m-line-\b) to[out= 180 -8*\x, in= 180 - 8*\m] (\x-point-\y);
                    \draw[myEdge, blueishDark] (\m-lineCopy-\b) to[out=8*\x, in= 8*\m] (\x-pointCopy-\y);};
                \tikzmath{integer \j1;
                \j1 = Mod(\b+1,3); }
                \draw[myArc] (\m-point-\b) to (\m-pointCopy-\b);
                \draw[myArc] (\m-pointCopy-\b) to (\m-point-\j1);
                \draw[myArc] (\m-line-\b) to (\m-lineCopy-\b);
                \draw[myArc] (\m-lineCopy-\b) to (\m-line-\j1);};};
    \end{tikzpicture}    
    \caption{Construction for the $[1,3;6]$-mixed graph with 36 nodes.}
    \label{fig:1-3-6-Construction}
\end{figure}

\section{Lower Bound for mixed cages\label{sec:lower_bounds}}

The family of mixed graphs of girth 6 inspired us for the following lower bound on the minimum order of $[z,r;g]$-mixed cages. The main idea is as follows. The circulant graph $\overset{\rightarrow}{C}_q(0, \dots, z)$ is one of the smallest directed graphs that are $z$-regular with girth $g$, where $q = z(g-1) + 1$. Clearly, adding any new arc (and thus edge) to this digraph creates a directed cycle. Consider $g$ consecutive nodes and add on them the tree $\mathcal{T}_{r,g}$ obtained from Moore's Lower bound as stated on \cref{sec:preliminaries}. This structure is clearly a lower bound for any $[z,r;g]$-mixed cage. See \cref{fig:lower-bound} for an example on with parameters $z=2, r=5,$ and $g=6$.

\begin{figure}
    \centering
    \begin{tikzpicture}[
    myArc/.style={draw,line width = 1.5pt, -{Stealth[length=4mm]}}, 
    myEdge/.style={line width = 1.5pt}, 
    state/.style={circle,  minimum size=2em, draw, line width = 1.2pt},
    stateTree/.style={circle,  minimum size=2em, draw, line width = 2pt, orangeMedium},
    stateSmall/.style={circle,  minimum size=1pt, draw, line width = 1.2pt}]    

        \node[stateTree] at (3.5, 27.3) (level0-0) {\normalsize 0};
        \node[stateTree] at (10, 27.3) (level0-1) {\normalsize  1};
        \node[stateTree] at (10, 23.3) (level0-2) {\normalsize 2};
        \node[stateTree] at (10, 14) (level0-3) {\normalsize 3};
        \node[stateTree] at (10, 10) (level0-4) {\normalsize 4};
        \node[stateTree] at (3.5, 10) (level0-5) {\normalsize 5};
        \foreach \j [count = \c from 0, count = \b from 1] in {10,...,6}{
            \coordinate (coord-\j) at ($(level0-\c)!0.5!(level0-\b)$);
            \newdimen\ey
            \pgfextracty{\ey}{\pgfpointanchor{coord-\j}{center}}   
            \node[state] at  (0,\ey) (level0-\j) {\normalsize \j};};
    
        \node[stateSmall, above = 1.8em of level0-1] (level1-1-1)  { };
        \node[stateSmall, below = 1.8em of level0-4] (level1-4-1)  { };
        \foreach \j \aux in {1/1,4/-1}{
            \node[stateSmall, right = 4em of level1-\j-1] (level1-\j-2)  { };
            \node[stateSmall, right = 4em of level1-\j-2] (level1-\j-3)  { }; 
            \node[stateSmall, left = 4em of level1-\j-1] (level1-\j-4)  { };
            \node[stateSmall, left = 4em of level1-\j-4] (level1-\j-5)  { };
            \foreach \i in {1,...,5}{
                \draw[myEdge] (level0-\j) to (level1-\j-\i);}
                \node[stateSmall, right = 4em of level1-\j-1] (level1-\j-2)  { };
                \node[stateSmall, right = 4em of level1-\j-2] (level1-\j-3)  { }; 
                \node[stateSmall, left = 4em of level1-\j-1] (level1-\j-4)  { };
                \node[stateSmall, left = 4em of level1-\j-4] (level1-\j-5)  { };
                \foreach \i in {1,...,5}{
                    \draw[myEdge] (level0-\j) to (level1-\j-\i);}
        }      
        
        \foreach \j in {2,3}{
            \node[stateSmall, right = 1.8em of level0-\j] (level1-\j-1)  { };
            \node[stateSmall, above = 4em of level1-\j-1] (level1-\j-2)  { };
            \node[stateSmall, above = 4em of level1-\j-2] (level1-\j-3)  { }; 
            \node[stateSmall, below = 4em of level1-\j-1] (level1-\j-4)  { };
            \node[stateSmall, below = 4em of level1-\j-4] (level1-\j-5)  { };
            \foreach \i in {1,...,5}{
                \draw[myEdge] (level0-\j) to (level1-\j-\i);};};
       
        \foreach \j in {2,3}{
            \foreach \i in {1,...,5}{
                \coordinate (coord2-\j-\i) at ($(level1-\j-\i) + (1,0)$);
                \node[stateSmall, above = 1pt of coord2-\j-\i] (level2-\j-\i-1)  { };
                \node[stateSmall, above = 1pt of level2-\j-\i-1] (level2-\j-\i-2)  { }; 
                \node[stateSmall, below = 1pt of coord2-\j-\i] (level2-\j-\i-3)  { };
                \node[stateSmall, below = 1pt of level2-\j-\i-3] (level2-\j-\i-4)  { };
                \foreach \k in {1,...,4}{
                \draw[myEdge] (level1-\j-\i) to (level2-\j-\i-\k);};};};
        \foreach \i in {0, ..., 10}{
            \tikzmath{integer \j1, \j2;
            \j1 = Mod(\i+1,11); \j2 = Mod(\i+2,11);}
            \draw[myArc] (level0-\i) to (level0-\j1);
            \draw[myArc, bend right] (level0-\i) to (level0-\j2);};
    \end{tikzpicture}
    \caption{Lower bound for a $[2,5;6]$-mixed cage with 11 + 66 - 6 = 69 nodes. Circulant graph has numbered vertices. The first level of vertices of the tree is shown in thick orange. }
    \label{fig:lower-bound}
\end{figure}

\begin{thm}\label{thm:lower_bound_mixedCages}
    Let $n[z,r;g]$ be the minimum order of a $[z,r;g]$-mixed cage. Then, $n[z,r;g] \geq n[z,g] + n_{\text{AHM}}[1,r;g] - g$. 
\end{thm}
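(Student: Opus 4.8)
The plan is to locate, inside an arbitrary $[z,r;g]$-mixed cage $G$, two substructures whose vertex sets overlap only in a common directed spine of $g$ vertices, and whose sizes are controlled by the two terms $n[z,g]$ and $n_{\text{AHM}}[1,r;g]$. The heuristic ``minimum digraph plus attached trees'' from the paragraph preceding the statement is turned into a lower bound by arguing that the tree grown off the spine produces vertices genuinely additional to those demanded by the digraph.

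First I would look only at the arcs of $G$. Since every vertex has exactly $z$ in-arcs and $z$ out-arcs, the arc set forms a $z$-regular digraph $D$ on $V(G)$, and since every directed cycle of $D$ is in particular a (mixed) cycle of $G$, the directed girth of $D$ is at least $g$. Using the definition of $n[z,g]$ from \cref{conj:order_of_digraph_is_verticesCirculant}, together with the monotonicity of this quantity in the girth, this already gives $|V(G)| = |V(D)| \geq n[z,g]$. Because every out-degree is at least $1$ and $D$ has no directed cycle shorter than $g$, following out-arcs from any vertex produces a directed path $P = (v_1, \ldots, v_g)$ on $g$ \emph{distinct} vertices (the first repetition cannot occur before step $g$); this path will serve as the spine on which to hang the mixed tree $\mathcal{T}_{r,g}$.

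Next I would grow $\mathcal{T}_{r,g}$ from $P$ exactly as in the proof of the AHM bound (\cref{thm:AHM_lower_bound}): using the $r$ edges at each vertex, attach to the spine the prescribed Moore trees of edge-neighbours, namely a Moore tree of depth $i$ at $v_{i+1}$ and at $v_{g-i}$. That argument uses only that the spine is a directed path of length $g-1$ and that each vertex carries $r$ edges, so it transfers verbatim from $z=1$ to general $z$: any coincidence among these vertices would close a mixed cycle shorter than $g$, contradicting the girth, hence all $n_{\text{AHM}}[1,r;g]$ of them are pairwise distinct. Denote this vertex set by $T$, and let $W = T \setminus P$ be the $n_{\text{AHM}}[1,r;g] - g$ vertices lying off the spine.

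Finally I would combine the two counts. The vertices of $W$ are reached from the spine along edge-paths, whereas the bound $|V(D)| \geq n[z,g]$ is forced by the directed out-expansion of $D$; the claim is that the only vertices shared by these two expansions are those of the spine $P$, any further coincidence again producing a mixed cycle of length less than $g$ by concatenating a short directed path, a short edge-path, and a sub-path of the spine. Granting this, the vertices of $W$ are additional to the $n[z,g]$ demanded by the digraph, so that
\[
 n[z,r;g] = |V(G)| \geq n[z,g] + \bigl(n_{\text{AHM}}[1,r;g] - g\bigr),
\]
which is the assertion. I expect this last disjointness step to be the main obstacle: because the Behzad--Chartrand--Wall value $n[z,g]$ is open in general and must be treated as a black box, one cannot exhibit explicit ``digraph vertices'' to separate from $W$, and the cleanest rigorous route is to prove the equivalent statement $|V(G) \setminus W| \geq n[z,g]$ directly, i.e.\ that deleting the edge-grown tree cannot relieve the girth-$g$, $z$-regular digraph obligation on the remaining vertices. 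A naive splitting-off of the vertices of $W$ from $D$ does not obviously work, since bypassing a vertex can shorten a directed cycle by one, so the girth/length accounting here needs care. A secondary point to pin down is the monotonicity of $n[z,g]$ in $g$, which is what licenses passing from directed girth ``at least $g$'' to the clean bound $n[z,g]$.
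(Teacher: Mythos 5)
Your proposal is the paper's own argument, carried out with more care than the paper itself. The paper's proof of \cref{thm:lower_bound_mixedCages} consists of three sentences: any $[z,r;g]$-mixed graph needs as many nodes as a $z$-regular digraph of girth $g$; over such a digraph one takes $g$ consecutive nodes and adds $\mathcal{T}_{r,g}$; this provides $n_{\text{AHM}}[1,r;g]-g$ further nodes. That is exactly your first two steps (the arc-subgraph $D$ is a spanning $z$-regular digraph of directed girth at least $g$, hence $|V(G)|\geq n[z,g]$; a directed path on $g$ distinct vertices exists and supports a copy of $\mathcal{T}_{r,g}$ grown through edges, all of whose vertices are distinct by the girth condition), followed by an unargued assertion of precisely the additivity that you flag as the main obstacle.

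So the difficulty you identify is real, and it is a gap in the published proof rather than an omission of yours. The bound $|V(G)|\geq n[z,g]$ already counts the off-spine tree vertices $W$, since they too carry $z$ in- and out-arcs; to conclude $|V(G)|\geq n[z,g]+|W|$ one must show $|V(G)\setminus W|\geq n[z,g]$, and, as you note, $D-W$ is no longer $z$-regular, while splitting off the paths through $W$ can shorten directed cycles below $g$ (and can create parallel arcs, which are forbidden in these simple mixed graphs), so neither repair is immediate, and $n[z,g]$ cannot be opened up as a black box to exhibit which vertices it ``uses.'' Your secondary caveat is also correct and equally unaddressed in the paper: $D$ has directed girth at least $g$, possibly larger, so invoking $n[z,g]$ requires either monotonicity of $n[z,g]$ in $g$ or reading $n[z,g]$ as the minimum order over girth at least $g$. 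In short, your proposal reproduces the paper's reasoning and correctly locates the point at which it is incomplete; the paper does not supply the missing disjointness argument.
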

\begin{proof}
    Any lower bound of $n[z,r;g]$ requires as many nodes as any $z$-regular directed graph with girth $g$. Over such digraph, we can take $g$ consecutive nodes and add $\mathcal{T}_{r,g}$. This provides $n_{\text{AHM}}[1,r;g] - g$ nodes, concluding the proof.
\end{proof}

Based on \cref{conj:order_of_digraph_is_verticesCirculant}, then we obtain the following.

\begin{conjec}\label{conj:lower_bound_mixedCages}
    If \cref{conj:order_of_digraph_is_verticesCirculant} is true, then $n[z,r;g] \geq z(g-1) + 1 + n_{\text{AHM}}[1,r;g] - g$.
\end{conjec}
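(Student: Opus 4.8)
The plan is to bound $n[z,r;g]$ by superposing two independent Moore-type counts on a $[z,r;g]$-mixed cage $G$: a directed one coming from the arcs of $G$, and the undirected AHM count coming from the edges around a directed path. First I would pass to the arc-subdigraph $D=(V(G),A(G))$. Every vertex of $G$ carries exactly $z$ in-arcs and $z$ out-arcs, so $D$ is $z$-regular; and since every directed cycle of $D$ is in particular a cycle of $G$, the directed girth of $D$ is at least $g$. Hence $D$ is a $z$-regular digraph of girth at least $g$, and by definition of $n[z,g]$ (together with monotonicity of directed cages in the girth, so that girth strictly above $g$ only costs more vertices) we already obtain $|V(G)|=|V(D)|\ge n[z,g]$.

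Next I would fix a directed path $v_1\to v_2\to\cdots\to v_g$ in $D$. Such a path exists and uses $g$ pairwise distinct vertices, since a repetition $v_i=v_j$ with $i<j\le g$ would close a directed cycle of length $j-i<g$, contradicting the girth. Treating this path as the directed spine of the AHM mixed-tree, I would rebuild $\mathcal{T}_{r,g}$ inside $G$: using the $r$ edges at each internal vertex, attach to $v_{i+1}$ and to $v_{g-i}$ an edge-Moore-tree of depth $i$, for $i=1,\dots,\lfloor (g-1)/2\rfloor$. The girth-$g$ hypothesis is exactly what makes the AHM counting valid, so (as in \cref{thm:AHM_lower_bound}) all $n_{\text{AHM}}[1,r;g]$ tree vertices are pairwise distinct. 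Write $T$ for the set of its $n_{\text{AHM}}[1,r;g]-g$ non-spine vertices; each lies off the spine and is reachable from it by an edge-path of length at most $\lfloor (g-1)/2\rfloor$.

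The remaining—and main—step is to show that the two counts add, i.e. that $|V(G)\setminus T|\ge n[z,g]$; combined with $|T|=n_{\text{AHM}}[1,r;g]-g$ this is precisely the claimed inequality. I expect this to be the principal obstacle, because $|V(D)|\ge n[z,g]$ is a \emph{global} statement about $D$, so it is not automatic that the vertices it forces can be taken disjoint from the \emph{locally}-defined tree $T$. The intended resolution is a girth argument: were a vertex of $T$ to coincide with one of the arc-forced vertices, then concatenating its short edge-path back to the spine, a directed arc-path, and the intervening spine segment would close a mixed cycle, which one wants to be shorter than $g$. Carrying this out uniformly is the delicate point, since the borderline configurations (where the putative mixed cycle has length exactly $g$) are exactly what must be ruled out—e.g.\ by seating the spine on a shortest directed cycle so as to push every such coincidence strictly below the girth, or by replacing the per-vertex argument with a global count of $V(G)$. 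This is what upgrades the additive heuristic of \cref{fig:lower-bound} into a proof. As a consistency check, for $z=1$ one has $n[1,g]=g$ and the bound collapses to $n[1,r;g]\ge n_{\text{AHM}}[1,r;g]$, recovering \cref{thm:AHM_lower_bound}; moreover it is tight for the order-$30$ cage of \cref{sec:1-3-6_construction} at $(z,r,g)=(1,3,6)$.
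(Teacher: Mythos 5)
Your proposal is not complete, and the incompleteness is exactly where you say it is. But first, a structural remark: the paper's own proof of this statement is a one-line substitution. It invokes \cref{thm:lower_bound_mixedCages}, which asserts $n[z,r;g] \geq n[z,g] + n_{\text{AHM}}[1,r;g] - g$ unconditionally, and then replaces $n[z,g]$ by the value $z(g-1)+1$ conjectured in \cref{conj:order_of_digraph_is_verticesCirculant}. Everything you do before that substitution is a blind reconstruction of \cref{thm:lower_bound_mixedCages}, and your two ingredients are the same as the paper's: the arc-subdigraph $D$ is $z$-regular of girth at least $g$ (with monotonicity in the girth supplied for free by the conjectured formula), and the AHM tree $\mathcal{T}_{r,g}$ can be grown on a directed path of $g$ vertices of $D$.

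The genuine gap is the one you flag yourself and never close: you do not prove that the two counts add, i.e.\ that $|V(G)\setminus T| \geq n[z,g]$ where $T$ is the set of non-spine tree vertices. The digraph bound is a \emph{global} statement about the spanning subdigraph $D$, and the tree vertices are themselves vertices of $D$; deleting $T$ destroys $z$-regularity, so no appeal to $n[z,g]$ applies to what remains, and a girth argument about short mixed cycles does not by itself recover the count. Your two suggested repairs (seating the spine on a shortest directed cycle, or replacing the per-vertex argument by a global count) are gestured at but not carried out. As written, your argument establishes only $n[z,r;g] \geq \max\left\{ z(g-1)+1,\, n_{\text{AHM}}[1,r;g] \right\}$, not the sum, so it does not prove the statement. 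It is only fair to add that the paper does not close this gap either: its proof of \cref{thm:lower_bound_mixedCages} is the same two-sentence heuristic (take a minimum digraph, take $g$ consecutive nodes, attach $\mathcal{T}_{r,g}$), with no justification that the attached tree vertices can be counted on top of the $n[z,g]$ vertices forced by the digraph structure. So you have correctly isolated the step on which both your argument and the paper's rest; but since your proposal acknowledges it as open rather than resolving it, it is a proof sketch with the same hole, not a proof.
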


Note that if $z=1$ the lower bound provided in \cref{thm:lower_bound_mixedCages} coincides with the AHM bound, but our construction contains an extra arc connecting the first and sixth vertices on the first level.

\section{The [1,3;6]-mixed cage from Galois' Projective Plane\label{sec:1-3-6_construction}}
In this section, we derive the $[1,3;6]$-mixed cage starting from the graph $G_{(2,4)}$ described in~\cref{sec:preliminaries}. Recall that $G_{(2,4)}$ represents the adjacency graph of the Galois projective plane $GF(2,4)$ and it contains 42 nodes. Exoo~\cite{exoo2023mixed} exhibited a $[1,3;6]$-mixed cage with 30 nodes that can be seeing as the connection of tree directed cycles with 10 vertices. The mixed cycles generated the girth equals to 6. It is not difficult to see that our construction is equivalent to that graph. 

\begin{thm}
    The $[1,3;6]$-mixed cage can be derived from $G_{(2,4)}$.
\end{thm}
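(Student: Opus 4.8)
The plan is to start from the incidence graph $G_{(2,4)}$ of the projective plane of order $4$, which has $42$ vertices and girth $6$, and surgically convert a selected subset of its edges into arcs so that the resulting mixed graph is $[1,3;6]$-regular while keeping the order as small as possible (Exoo's cage has $30$ vertices, so I expect to delete $12$ vertices from $G_{(2,4)}$ and redirect some of the remaining edges). Concretely, I would first recall the structure of $G_{(2,4)}$: its $42$ vertices split into the infinity point $P_\infty$, the infinity line $L_\infty$, the $q=4$ point-classes $P_i$ and line-classes $L_i$ (that is, the ``base'' vertices), and the $q^2=16$ affine points $(x,y)$ together with the $q^2=16$ affine lines $[m,b]$. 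Every vertex of $G_{(2,4)}$ has degree $q+1 = 5$. Since the target mixed graph has each vertex incident to $z=1$ in-arc, $1$ out-arc, and $r=3$ edges, the total degree (counting each arc once) must be $3+1+1 = 5$, matching the degree in $G_{(2,4)}$; this degree bookkeeping is the first thing I would verify, and it strongly suggests that the construction should delete the $10$ ``infinity/base'' vertices $P_\infty,L_\infty,P_i,L_i$ to leave exactly the $32$ affine vertices, then somehow reduce to $30$ --- or alternatively keep a cleverly chosen $30$-vertex subset.

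Next I would identify which $30$ vertices survive and which $5$ edges at each surviving vertex become the three undirected edges and the single in/out arc pair. The natural guide is the remark in the preliminaries that deleting $P_\infty, L_\infty$ and all $P_i, L_i$ yields the biaffine graph $B_q$ on $2q^2$ vertices; for $q=4$ that is $32$ vertices of girth $6$, which is already regular of some degree. I would compute that degree (each affine point $(x,y)$ lost its incidences to $P_\infty$ and to the four base lines, etc.) and check how many edges remain per vertex, then decide how to orient a spanning collection of them into arcs so that each vertex receives exactly one in-arc and one out-arc. Because Exoo describes his cage as ``three directed cycles of length $10$ joined by edges,'' I expect the $30$ arc-endpoints to decompose into three disjoint directed $10$-cycles; so the key combinatorial step is to exhibit, inside the (bi)affine incidence structure, three vertex-disjoint cycles of length $10$ on which the orientation is consistent, leaving exactly three undirected edges at each vertex. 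I would make this explicit using the algebraic coordinates $(x,y)$ and $[m,b]$ over $\mathbb{G}_4$, choosing the arcs to correspond to a fixed additive or multiplicative shift (for instance $(x,y)\mapsto[m,b]$ relations that cycle the field elements), which both guarantees regularity by the field's symmetry and lets me track cycle lengths algebraically.

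The verification then has two parts. First, regularity: I would check that after orientation every vertex has exactly $z=1$ in-arc, $1$ out-arc, and $r=3$ edges, which reduces to confirming that the chosen arc-set is a disjoint union of directed cycles covering all $30$ vertices and that removing these arcs from the degree-$5$ incidence graph leaves a $3$-regular graph. Second, girth: I must show the mixed girth equals $6$. Here I would use that $G_{(2,4)}$ and $B_q$ already have girth $6$ as undirected graphs, so no short undirected cycle is created by deletion; the only danger is a short \emph{mixed} cycle created by the orientation, i.e.\ a directed-or-mixed closed walk of length $<6$ respecting arc directions. Since mixed cycles must traverse arcs in the correct direction, and the shortest directed cycles I build have length $10$, any mixed cycle of length $<6$ would have to mix at most a few arcs with edges; I would rule these out by a short case analysis on how many arcs a putative $3,4,5$-cycle can use, leveraging the girth-$6$ property for the edge-only part and the directed-$10$-cycle structure for the arc part.

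The main obstacle I anticipate is the girth verification for \emph{mixed} cycles, not the regularity count. Showing there is no closed walk of length $3$, $4$, or $5$ that respects arc orientations requires ruling out all the ways a few edges and a few correctly-oriented arcs could close up, and this is exactly where the specific algebraic choice of which edges to orient matters: a careless orientation could create, say, a $4$-cycle using two arcs and two edges even though the underlying incidence graph has girth $6$. I therefore expect the crux of the argument to be choosing the orientation (equivalently, the three directed $10$-cycles) so that the arcs lie in ``generic position'' with respect to the $6$-cycles of $G_{(2,4)}$, and then checking --- by the coordinate arithmetic over $\mathbb{G}_4$ --- that every mixed closed walk using at least one arc has length at least $6$, with equality realized, so that the girth is exactly $6$ and the order $30$ matches Exoo's cage.
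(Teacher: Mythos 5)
Your degree bookkeeping is the right first check, but you draw the wrong conclusion from it, and this breaks the whole plan. In the target mixed graph every vertex needs $3+1+1=5$ incidences, i.e.\ full degree $5$; consequently, any surviving vertex that loses an incidence when vertices are deleted must have that incidence replaced by a \emph{new} adjacency not present in $G_{(2,4)}$. Your proposal deletes the ten vertices $P_\infty, L_\infty, P_i, L_i$ and works inside the biaffine graph $B_4$ (then down to $30$ vertices), intending only to orient existing edges. But every vertex of $B_4$ has degree $4$, not $5$: an affine point $(x,y)$ is incident to exactly four affine lines (one per slope $m$), its fifth neighbour in $G_{(2,4)}$ being the deleted class vertex $L_x$. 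After orienting two of the four remaining incidences into one in-arc and one out-arc, a vertex retains only two undirected edges, never three. No choice of $30$-vertex induced subgraph avoids this: since $G_{(2,4)}$ is connected and $5$-regular, deleting any $12$ vertices leaves many vertices of degree less than $5$. So ``orient a spanning collection of existing edges'' can never produce a $[1,3;6]$-mixed graph on $30$ vertices; new arcs and edges must be added, and your proposal has no mechanism for producing them.

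This is exactly what the paper's construction does, and it is the part your plan is missing. The paper keeps $P_\infty$, $L_\infty$ and all class vertices $P_i$, $L_i$, deletes $12$ \emph{affine} vertices (six lines, six points), orients some surviving incidence edges, and then adds a substantial number of arcs between vertices that are \emph{not} adjacent in $G_{(2,4)}$ --- for instance $(P_0, L_{\alpha^2})$, arcs from affine lines with slope $\alpha^2$ to class vertices $P_j$ with $j \neq \alpha^2$, and a matching of arcs between affine points and non-incident affine lines --- plus one new edge; each addition is checked against the creation of mixed cycles of length less than $6$. Your instinct that the girth verification for mixed cycles is the crux is correct, but the danger comes precisely from these newly added non-incidence arcs and edges, not from orientations of incidence edges, so the case analysis you sketch (edges from a girth-$6$ graph plus arcs lying on long directed cycles) addresses the wrong configuration space.
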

\begin{proof}
    Recall that $G_{(2,4)}$ has girth 6.
    Consider the following construction,  where the main idea is to keep the girth while adding the missing arcs and trying to keep as many edges from  $G_{(2,4)}$ as possible. Clearly, we need to remove 12 vertices too. \cref{fig:cage3-1-6} shows the final graph.
    We refer as \emph{direct edge $\overrightarrow{uv\,\,}$} to the operation of deleting edge $uv$ and then adding arc $(u,v)$. Hence, the construction has the following steps.
    \begin{enumerate}
        \item Erase lines $[m,0], [m,1]$ for $m \in \{ 0,1 \}$, and $[\alpha,\alpha], [\alpha,\alpha^2]$. \\
        Erase points $(x,0), (x,1)$ for $x \in \{ 1, \alpha \}$, and $(\alpha^2,\alpha), (\alpha^2,\alpha^2) $. \label{item:step1} 
        \item Direct edges
        $\overrightarrow{[\alpha^2\alpha] P_{\alpha^2}}, \,\, \overrightarrow{P_{\alpha^2} L_\infty}, \,\,\overrightarrow{L_{\infty}P_\infty}, \,\, \overrightarrow{P_\infty L_0}, \,\, \overrightarrow{L_0 (0,\alpha^2)\,\,}$.\label{item:step2}
        \item Add arcs $(P_0, L_{\alpha^2})$, and $(P_1, L_\alpha)$.\\
        Add arc $(P_\alpha, L_{1})$ and orient edges $\overrightarrow{{[\alpha,0]} {(1,\alpha)}\,\,}$ and $\overrightarrow{[\alpha,1] (1,\alpha^2)\,\,}$.\label{item:step3}
        \item Add arc $\displaystyle \left(L_1, (0,\alpha)\right)$ and orient edges $\overrightarrow{(1,\alpha)[0,\alpha]\,\,}$ and $\overrightarrow{(1,\alpha^2)[1,\alpha]\,\,}$. \\
        Add arc $\displaystyle \left(L_\alpha, (0,1)\right)$ and orient edge $\overrightarrow{(\alpha,\alpha)[\alpha,1]\,\,}$.\\ 
        Add arc $\displaystyle \left(L_\alpha^2, (0,0)\right)$ and orient edge $\overrightarrow{(\alpha^2,1)[\alpha,0]\,\,}$.\label{item:step4}
        \item Add arcs $\displaystyle \left([\alpha^2,0],P_1\right) , \left( [\alpha^2,1],P_0 \right) , \left([\alpha^2,\alpha^2],P_\alpha \right)$. \label{item:step5}
        \item Add arc $\displaystyle \left( [0,\alpha], (\alpha^2,0)\right)$, orient edge $\overrightarrow{(0,\alpha)[\alpha^2, \alpha]\,\,}$ and remove edge $(\alpha^2,0) [1, \alpha^2]$.\label{item:step6} \\
        Add arc $\displaystyle \left([0,\alpha^2], (\alpha^2,1) \right)$, orient edge $\overrightarrow{(0,\alpha^2)[\alpha^2, \alpha^2]\,\,}$ and remove edge $(\alpha,\alpha^2) [0, \alpha^2]$.
        \item Add arcs $\displaystyle \left([1, \alpha],(\alpha,\alpha^2)\right)$ and $\displaystyle \left([1, \alpha^2],(\alpha,\alpha)\right)$.\label{item:step7}
        \item Add arcs $\displaystyle \left((0,1), [0, \alpha^2] \right)$, $\displaystyle \left((0,0), [1, \alpha^2] \right)$, $\displaystyle \left((\alpha^2,0),[ \alpha^2,0] \right)$, $\displaystyle \left((\alpha,\alpha^2),[\alpha^2,1] \right)$.\label{item:step8}
        \item Add edge $[\alpha^2,\alpha] (0,\alpha^2)$.\label{item:step9}
    \end{enumerate}

    \begin{figure}
         \centering
         \begin{tikzpicture}[scale=.75,node distance=7em, 
            myArc/.style={draw,line width = 1.5pt,blueishMedium, -{Stealth[length=4mm]}}, 
            myEdge/.style={line width = 1.5pt}, 
            state/.style={circle,  minimum size=2.9em, draw}, 
            stateColor/.style={circle,  minimum size=2.9em, draw,line width = 1.5pt},
            stateSub/.style={circle,  minimum size=2.3em, draw,line width = 1.5pt}]
            \node (P0) at (0,20) [state] {\scriptsize$P_0$};
            \node (P1) [state, below=10em of P0] {\scriptsize$P_1$};
            \node (P2) [state, below=10em of P1] {\scriptsize$P_\alpha$};
            \node (P3) [state, below =10em of P2] {\scriptsize$P_{\alpha^2}$};
            \node (L0) at ($(P3)+(11,0)$) [state] {\scriptsize$L_0$};
            \node (L1) [state, above=10em of L0] {\scriptsize$L_1$};
            \node (L2) [state, above=10em of L1] {\scriptsize$L_\alpha$};
            \node (L3) [state, above =10em of L2] {\scriptsize$L_{\alpha^2}$};
            \coordinate (neigh-P-0) at ($(P0)+(2.5,0)$);
            \node (l-0-1) [state, above = 0.23ex  of neigh-P-0,gray!40] {\scriptsize ${[0,\!1]}$};
            \node (l-0-0) [state, above  = 0.46ex of l-0-1,gray!40] {\scriptsize ${[0,\!0]}$};
            \node (l-0-2) [stateColor, below = 0.23ex of neigh-P-0, blueishLight] {\textcolor{black}{\scriptsize ${[0,\!\alpha]}$}};
            \node (l-0-3) [stateColor, below = 0.46ex of l-0-2,blueishDark] {\textcolor{black}{\scriptsize ${[0,\!\alpha^2]}$}};
            \coordinate (neigh-P-1) at ($(P1)+(2.5,0)$);
            \node (l-1-1) [state, above = 0.23ex  of neigh-P-1,gray!40] {\scriptsize ${[1\!,\!1]}$};
            \node (l-1-0) [state, above  = 0.46ex of l-1-1,gray!40] {\scriptsize ${[1\!,\!0]}$};
            \node (l-1-2) [stateColor, below = 0.23ex of neigh-P-1, orangeMedium] {\textcolor{black}{\scriptsize ${[1\!,\!\alpha]}$}};
            \node (l-1-3) [stateColor, below = 0.46ex of l-1-2,orangeDark] {\textcolor{black}{\scriptsize ${[1\!,\!\alpha^2]}$}};
            \coordinate (neigh-P-2) at ($(P2)+(2.5,0)$);
            \node (l-2-1) [stateColor, above = 0.23ex  of neigh-P-2, greenDark] {\textcolor{black}{\scriptsize ${[\alpha\!,\!1]}$}};
            \node (l-2-0) [stateColor, above  = 0.46ex of l-2-1, greenMedium] {\textcolor{black}{\scriptsize ${[\alpha\!,\!0]}$}};
            \node (l-2-2) [state, below = 0.23ex of neigh-P-2,gray!40 ] {\scriptsize ${[\alpha\!,\!\alpha]}$};
            \node (l-2-3) [state, below = 0.46ex of l-2-2,gray!40] {\scriptsize ${[\alpha\!,\!\alpha^2]}$};
            \coordinate (neigh-P-3) at ($(P3)+(2.5,0)$);
            \node (l-3-1) [stateColor, above = 0.23ex  of neigh-P-3,pinkMedium] {\textcolor{black}{\scriptsize ${[\alpha^2\!,\!1]}$}};
            \node (l-3-0) [stateColor, above  = 0.46ex of l-3-1, pinkLight] {\textcolor{black}{\scriptsize ${[\alpha^2\!,\!0]}$}};
            \node (l-3-2) [stateColor, below = 0.23ex of neigh-P-3, pinkDark] {\textcolor{black}{\scriptsize ${[\alpha^2\!,\!\alpha]}$}};
            \node (l-3-3) [stateColor, below = 0.46ex of l-3-2, pinkDark2] {\textcolor{black}{\scriptsize ${[\alpha^2\!,\!\alpha^2]}$}};
            \coordinate (neigh-L-0) at ($(L0)+(-2.5,0)$);
            \node (p-0-1) [stateColor, above = 0.23ex  of neigh-L-0, greenDark] {\textcolor{black}{\scriptsize ${(0,\!1)}$}};
                 \node [stateSub, pinkMedium] at (p-0-1) {};
            \node (p-0-0) [stateColor, above  = 0.46ex of p-0-1,greenMedium] {\textcolor{black}{\scriptsize ${(0,\!0)}$}};
                 \node [stateSub, pinkLight] at (p-0-0) {};
            \node (p-0-2) [stateColor, below = 0.23ex of neigh-L-0, blueishLight] {\textcolor{black}{\scriptsize ${(0,\!\alpha)}$}};
                 \node [stateSub, orangeMedium] at (p-0-2) {};
            \node (p-0-3) [stateColor, below = 0.46ex of p-0-2,blueishDark] {\textcolor{black}{\scriptsize ${(0,\!\alpha^2)}$}}; 
                 \node [stateSub, orangeDark] at (p-0-3) {};
            \coordinate (neigh-L-1) at ($(L1)+(-2.5,0)$);
            \node (p-1-1) [state, above = 0.23ex  of neigh-L-1,gray!40] {\scriptsize ${(1,1)}$};
            \node (p-1-0) [state, above  = 0.46ex of p-1-1,gray!40] {\scriptsize ${(1,0)}$};
            \node (p-1-2) [stateColor, below = 0.23ex of neigh-L-1, orangeDark] {\textcolor{black}{\scriptsize ${(1\!,\!\alpha)}$}};
                 \node [stateSub, pinkMedium] at (p-1-2) {};
            \node (p-1-3) [stateColor, below = 0.46ex of p-1-2,blueishDark] {\textcolor{black}{\scriptsize ${(1\!,\!\alpha^2)}$}};
                 \node [stateSub, pinkLight] at (p-1-3) {};
            \coordinate (neigh-L-2) at ($(L2)+(-2.5,0)$);
            \node (p-2-1) [state, above = 0.23ex  of neigh-L-2,gray!40] {\scriptsize ${(\alpha\!,\!1)}$};
            \node (p-2-0) [state, above  = 0.46ex of p-2-1,gray!40] {\scriptsize ${(\alpha\!,\!0)}$};
            \node (p-2-2) [stateColor, below = 0.23ex of neigh-L-2,blueishLight] {\textcolor{black}{\scriptsize ${(\alpha\!,\!\alpha)}$}};
                 \node [stateSub, pinkDark2] at (p-2-2) {};
            \node (p-2-3) [stateColor, below = 0.46ex of p-2-2, greenMedium] {\textcolor{black}{\scriptsize ${(\alpha\!,\!\alpha^2)}$}};
                 \node [stateSub, pinkDark] at (p-2-3) {};
            \coordinate (neigh-L-3) at ($(L3)+(-2.5,0)$);
            \node (p-3-1) [stateColor, above = 0.23ex  of neigh-L-3, orangeMedium] {\textcolor{black}{\scriptsize ${(\alpha^2\!,\!1)}$}};
                 \node [stateSub, pinkDark2] at (p-3-1) {};
            \node (p-3-0) [stateColor, above  = 0.46ex of p-3-1, greenDark] {\textcolor{black}{\scriptsize ${(\alpha^2\!,\!0)}$}};
                 \node [stateSub, pinkDark] at (p-3-0) {};
            \node (p-3-2) [state, below = 0.23ex of neigh-L-3,gray!40] {\scriptsize ${(\alpha^2\!,\!\alpha)}$};
            \node (p-3-3) [state, below = 0.46ex of p-3-2,gray!40] {\scriptsize ${(\alpha^2\!,\!\alpha^2)}$};
            \coordinate (C3) at ($(L1)!0.5!(L2)$);
            \node (Pi) at ($(C3)+(2,0)$) [state] {\scriptsize$P_\infty$};
            \coordinate (C4) at ($(P1)!0.5!(P2)$);
            \node (Li) at ($(C4)-(2,0)$) [state] {\scriptsize$L_\infty$};
            \foreach \u \v in {Li/P0,Li/P1,Li/P2, Pi/L1,Pi/L2,Pi/L3,
            P0/l-0-2,P0/l-0-3,
            P1/l-1-2,P1/l-1-3,
            P2/l-2-0, P2/l-2-1,
            P3/l-3-0, P3/l-3-1,P3/l-3-3,
            L0/p-0-0, L0/p-0-1,L0/p-0-2,
            L1/p-1-2,L1/p-1-3,
            L2/p-2-2,L2/p-2-3,
            L3/p-3-0, L3/p-3-1}{
                \draw[myEdge] (\u) to (\v);};
            \foreach \u \v in {P0/l-0-0, P0/l-0-1,P1/l-1-0, P1/l-1-1,P2/l-2-2,P2/l-2-3,
            L1/p-1-0, L1/p-1-1,L2/p-2-0, L2/p-2-1,L3/p-3-2,L3/p-3-3,
            l-1-3.east/p-3-0.north west,l-0-3/p-2-3%
            }{
                \draw[dotted, line width = 1pt, gray!40] (\u) to (\v);};
            \coordinate (C5) at ($(P0)!0.5!(L3)+(0,4.7)$);
            \draw[myArc]  (Li) to  [out=105,in=-180] (C5) to [out=0,in=75]  (Pi);
            \foreach \u \v in {l-3-2/P3, P3/Li,Pi/L0,L0/p-0-3,l-0-2/p-3-0, l-0-3/p-3-1,
            l-2-0/p-1-2, l-2-1/p-1-3, p-1-2/l-0-2.east,p-1-3/l-1-2,p-2-2/l-2-1,p-3-1/l-2-0.east, p-0-2/l-3-2,
           p-0-3/l-3-3%
            }{%
                \draw[myArc] (\u) to (\v);};
            \coordinate (C6) at ($(l-0-0)!0.5!(p-3-0)+(0,1.1)$);
            \draw[myArc]  (P0) to  [out=90,in=-180] (C6) to [out=0,in=90]  (L3);
            \coordinate (C7) at ($(l-1-0)!0.5!(p-2-0)+(0,1.1)$);
            \draw[myArc]  (P1) to  [out=90,in=-180] (C7) to [out=0,in=90]  (L2);
            \coordinate (C8) at ($(l-2-0)!0.5!(p-1-0)+(0,1.1)$);
            \draw[myArc]  (P2) to  [out=90,in=-180] (C8) to [out=0,in=90]  (L1);
            \foreach \u \v \bending in {L3.east/p-0-0.east/60,L2.south/p-0-1.east/30, L1/p-0-2/30, l-3-0.west/P1.south/40, l-3-1/P0/55, l-3-3.north west/P2.south/15}{
            \draw[myArc, bend left=\bending] (\u) to (\v);};
            \foreach \u \v in {l-1-2/p-2-3, l-1-3/p-2-2, p-0-1.west/l-0-3, p-0-0.west/l-1-3.east, p-3-0.south west/l-3-0.east, p-2-3/l-3-1.east}{
            \draw[myArc] (\u) to (\v);};
            \draw[myEdge, orangeLight] (l-3-2) to (p-0-3);
        \end{tikzpicture}
         \caption{[1,3;6]-mixed cage  from graph $G_{(2,4)}$.
         Erased edges and vertices are denoted by gray.
         Arcs are shown in blue.
         The new edge is shown in yellow.
         The rest of edges belong to the original graph $G_{(2,4)}$, where line $[m,b]$ colors points $(x_1,y_1)$ and $(x_2,y_2)$, and thus each point has two colors associated.}
         \label{fig:cage3-1-6}
     \end{figure}
    
    This graph has 30 vertices, and each one has three edges and one in- and one out-arc. We verify no cycle of length less than 6 is created in any of the steps. 

     Clearly, no new cycle is created in steps~\ref{item:step1} and ~\ref{item:step2}. 
     For step 3, orienting edge $\overrightarrow{{[\alpha,0]} {(1,\alpha)}}$ removes cycle $(P_\alpha, L_{1}, (1,\alpha), [\alpha,0])$, and orienting edge $\overrightarrow{[\alpha,1] (1,\alpha^2)}$ removes cycle $(P_\alpha, L_{1}, (1,\alpha^2)), [\alpha,1])$. Note that those mixed cycles have the new arc and three edges from $G_{(2,4)}$, hence removing or correctly directing an edge results in removing the mixed cycle; this is the main idea of the rest of the steps. This kind of cycle do not exist for pairs $P_0, L_{\alpha^2}$ and $P_1, L_\alpha$ as the mixed cycles of the form $([m,b], P_m, L_x, (x,y))$ for which $y=m x +b$ were already erased in  steps~\ref{item:step1} and ~\ref{item:step2}.     
     For steps~\ref{item:step4} and \ref{item:step5} it also holds that no new cycle is created, by a similar argument as in step~\ref{item:step3}.
     The same applies to steps~\ref{item:step6} and \ref{item:step7}, where in addition we can see that no new cycle is created by adding an arc between the set of points adjacent to line $L$ and the set of lines adjacent to point $P$; the arc added has the same \textit{direction} as arc $(P,L)$ added in step~\ref{item:step2}.
     Step~\ref{item:step8} can be seeing as a matching between lines missing out-arcs and points missing in-arcs, in such a way that no new cycle is added. Finally, only  $[\alpha^2,\alpha]$ and  $(0,\alpha^2)$ were missing one edge, which is added in step~\ref{item:step9}, and no new cycle is added by the arcs already defined between points and lines adjacent to $L_0$ and $P_{\alpha^2}$.
\end{proof}

Note that the graph $G_{(2,4)}$ is quite symmetrical. We were able to remove lines and points on pairs. For example, the deletion of lines $[0,0]$ and $[0,1]$, and points $(\alpha^2,\alpha)$  and $(\alpha^2,\alpha^2)$ allowed to add arc $(P_0,L_{\alpha^2})$ and no new cycle was created. Other projective planes lack of this symmetric structure, meaning that only removing certain lines and points is not enough to keep the girth equals to 6. That is, it is needed to remove or direct further edges, resulting on a complete different graph, if such graph exists.
Based on this, we conjecture that it is not possible to extend this construction using other finite fields $\mathbb{F}_q$ to generate $[1,r;6]$-mixed cages for $r\geq 4$, $q$ a prime power. 

\section*{Conclusions}
We provided a general lower bound for the minimum order of $[z,r;g]$-mixed cages for any value of $z,r$, and $g$.

We obtained an infinity family of mixed graphs of girth 6 using the biaffine plane. We also tried to generate families using the projective plane, but this seems a more challenging problem. Furthermore, we conjecture that this is not possible as the graphs generated by projective planes are already quite dense (those are already undirected cages of girth 6). In this sense, the Galois field of order 4 that generated the $[1,3;6]$-mixed cage is quite special. Its symmetrical structure allowed us to safely remove points and lines, and add the missing arcs.

There is still a long path to find families of girth 5 and 6 that provide better upper bounds.

\subsubsection*{Acknowledgments}
\ifnum\Anonimo=1 {
    We are thankful to György Kiss and to the Erasmus+20 scholarship to bring the authors together.

\paragraph{Funding:} G. Araujo-Pardo was supported by PAPIIT-UNAM-M{\' e}xico IN113324.

G. Araujo-Pardo and L.M. Mendoza-Cadena were supported CONAHCyT: CBF2023-2024-552 M{\' e}xico.

M. Mendoza-Cadena was supported by Centro de Modelamiento Matemático (CMM) BASAL fund FB210005 for center of excellence from ANID-Chile.
}
\fi

\paragraph{Disclosure statement:} The authors report there are no competing interests to declare.


\bibliographystyle{plain}
\bibliography{cagesGirth6.bib}

\begin{thebibliography}{10}

\bibitem{araujo2006geometric}
Gabriela Araujo, Marc Noy, and Oriol Serra.
\newblock A geometric construction of large vertex transitive graphs of
  diameter two.
\newblock {\em Journal of Combinatorial Mathematics and Combinatorial
  Computing}, 57:97, 2006.

\bibitem{araujo2017family}
Gabriela Araujo-Pardo, Camino Balbuena, Mirka Miller, and Mária Ždímalová.
\newblock A family of mixed graphs with large order and diameter 2.
\newblock {\em Discrete Applied Mathematics}, 218:57--63, 2017.

\bibitem{araujo2009dicages}
Gabriela Araujo-Pardo, Camino Balbuena, and Mika Olsen.
\newblock On (k, g; l)-dicages.
\newblock {\em Ars Combinatoria}, 92:289--301, 2009.

\bibitem{araujo2022monotonicity}
Gabriela Araujo-Pardo, Claudia {De la Cruz}, and Diego González-Moreno.
\newblock Mixed cages: Monotonicity, connectivity and upper bounds.
\newblock {\em Discrete Mathematics}, 345(5):112792, 2022.

\bibitem{araujo2019mixed}
Gabriela Araujo-Pardo, César Hernández-Cruz, and Juan~José
  Montellano-Ballesteros.
\newblock Mixed cages.
\newblock {\em Graphs and Combinatorics}, 35(5):989--999, 2019.

\bibitem{araujopardo2025noteGirthFive}
Gabriela Araujo-Pardo and Mirabel Mendoza-Cadena.
\newblock A note on mixed cages of girth 5.
\newblock {\em Discrete Mathematics}, 349(2):114773, 2026.

\bibitem{balbuena2008incidence}
C.~Balbuena.
\newblock Incidence matrices of projective planes and of some regular bipartite
  graphs of girth 6 with few vertices.
\newblock {\em SIAM Journal on Discrete Mathematics}, 22(4):1351--1363, 2008.

\bibitem{behzad1970minimal}
Mehdi Behzad, Gary Chartrand, and Curtiss~E. Wall.
\newblock On minimal regular digraphs with given girth.
\newblock {\em Fundamenta Mathematicae}, 69(3):227--231, 1970.

\bibitem{exoo2023mixed}
Geoffrey Exoo.
\newblock {On Mixed Cages}.
\newblock {\em {Discrete Mathematics \& Theoretical Computer Science}}, {vol.
  25:2 }, November 2023.

\bibitem{exoo2012dynamic}
Geoffrey Exoo and Robert Jajcay.
\newblock Dynamic cage survey.
\newblock {\em The electronic journal of combinatorics}, 2013.

\bibitem{godsil2013algebraic}
Chris Godsil and Gordon~F Royle.
\newblock {\em Algebraic graph theory}, volume 207.
\newblock Springer Science \& Business Media, 2013.

\bibitem{hafner2004geometric}
Paul~R. Hafner.
\newblock Geometric realisation of the graphs of
  {M}ckay–{M}iller–{Š}iráň.
\newblock {\em Journal of Combinatorial Theory, Series B}, 90(2):223--232,
  2004.

\bibitem{jajcajova2024totallyregular}
T.~Jajcayov\'a and R.~Jajcay.
\newblock Totally regular mixed graphs constructed from the $cd(n,q)$ graphs of
  lazebnik, ustimenko and woldar.
\newblock {\em The Art of Discrete and Applied Mathematics}, 2025.
\newblock Algebraic and Extremal Graph Theory.

\bibitem{kiss2019finite}
Gyorgy Kiss and Tamas Szonyi.
\newblock {\em Finite geometries}.
\newblock Chapman and Hall/CRC, 2019.

\bibitem{miller2012moore}
Mirka Miller and Jozef Sir{\'a}n.
\newblock Moore graphs and beyond: A survey of the degree/diameter problem.
\newblock {\em The electronic journal of combinatorics}, 2013.

\bibitem{van2001course}
Jacobus~Hendricus Van~Lint and Richard~Michael Wilson.
\newblock {\em A course in combinatorics}.
\newblock Cambridge university press, 2001.

\end{thebibliography}
\end{document}